%
%

\documentclass[10pt]{amsart}
\usepackage{geometry} 
\geometry{b5paper} 
\usepackage{kpfonts}
\usepackage{amscd}
\usepackage{amsmath}
\usepackage[all,cmtip]{xy}
\usepackage{mathrsfs}
\usepackage{amsthm}
\usepackage[pdfpagelabels]{hyperref}
\usepackage{cite}

\newtheorem{theorem}{Theorem}           
\newtheorem{lemma}{Lemma}               
\newtheorem{corollary}{Corollary}

\theoremstyle{definition}

\newtheorem{definition}{Definition}

\newtheorem{remark}{Remark}



%
%

\begin{document}

\title[ON HUANG AND NOOR'S OPEN PROBLEM] {ON HUANG AND NOOR'S OPEN PROBLEM}
 \author[FA\.{I}K G\"{U}RSOY ]{FA\.{I}K G\"{U}RSOY}
\address{Faculty of Arts and Sciences, Department of Mathematics, Adiyaman University, Adiyaman, 02040, Turkey.}

\email{faikgursoy02@hotmail.com}

\subjclass{49J40, 58E35, 47H10}
\keywords{Strong convergence; Equivalence of convergence; Convergence rate; New iterative algorithm; Variational inclusions; H-monotone operators}

\begin{abstract}
In this paper, we introduce and analyze a new iterative algorithm for
solving a class of variational inclusions involving $H$-monotone operators.
The strong convergence of this algorithm is proved and estimate of its
convergence rate is supplied as an answer to an open question posed by Huang
and Noor [Huang, Z., Noor, M. A. (2007). Equivalency of convergence between
one-step iteration algorithm and two-step iteration algorithm of variational
inclusions for H-monotone mappings. Computers \& Mathematics with
Applications, 53(10), 1567-1571.].
\end{abstract}

\maketitle

\section{\protect\bigskip Introduction}

It is now considered indisputable that variational inequalities and
inclusions problems are among the most important mathematical problems. Such problems arise in many branches of
science including economics and transportation equilibrium, physics, nonlinear programming, optimization and control,
mechanics, engineering
and many others, and thus, they have been examined in detail for many years, see
[1-18, 20, 21, 23]. In the theory of variational inequalities and inclusions, the
development of influential and enforceable iterative algorithms are of the
greatest importance. In order to obtain solutions to the variational inequalities and
inclusions, numerous iterative algorithms have been introduced by a wide
audience of researchers, see [1, 2, 6-8, 10-14, 16-18, 20, 21] and the references
therein.

In \cite{Huang1}, a class of $H$-monotone
operators and a one-step iterative algorithm are introduced. It was shown in \cite{Huang1}
that this one-step iterative algorithm strongly converges to the solution of
variational inclusions for $H$-monotone and Lipschitzian continuous
operators. Two years later, Zeng et al. \cite{ZGY} generalized the result in 
\cite{Huang1} via introducing a two-step iterative algorithm in order to
solve the same problem of variational inclusion.

In 2007, Huang and Noor \cite{Noor1} showed that there is an equivalence
among convergence of some iterative algorithms including those defined in 
\cite{Huang1} and \cite{ZGY}. They also showed that, in solving the same
problem considered in \cite{Huang1} and \cite{ZGY}, the iterative algorithm
of \cite{Huang1} converges at a rate faster than the iterative algorithm of 
\cite{ZGY} does. In the same paper, they addressed the following question:

\textbf{Open Question (Huang and Noor \cite{Noor1}).} In order for solving the variational inclusion problem studied by \cite{Huang1} and \cite{ZGY},  can one construct a
better algorithm which converges by a more efficient convergence rate than the
algorithm of \cite{Huang1}?

In this paper, we introduce a new iterative algorithm which can be used to
approximate the solution of variational inclusions for $H$-monotone and
Lipschitzian continuous operators. Furthermore, as an affirmative answer to
the above open question, it will be shown that the iterative algorithm in
question is equivalent and faster than those introduced in \cite{Huang1} and 
\cite{ZGY}.

As a background to our exposition, we now recapitulate some definitions and
known results in the existing literature.

We will denote the set of all positive integers including zero by $\mathbb{N}$ and the set of all nonnegative real numbers by $\mathbb{R}^{+}\cup\left\{0\right\}$ over the course of this paper. Let $\mathcal{H}$ be a Hilbert space, $H:%
\mathcal{H\rightarrow H}$, $\left\langle \cdot ,\cdot \right\rangle $ be the
inner product, $\left\Vert \cdot \right\Vert $ be the norm.  $2^{\mathcal{%
H}}$ stands for family of all nonempty subsets of $\mathcal{H}$.

\begin{definition}
Let $T$, $H:\mathcal{H\rightarrow H}$ be two single-valued operators. $T$ is
said to be\newline
(i) monotone if
\begin{equation}
\left\langle Tx-Ty,x-y\right\rangle \geq 0\text{, for all }x\text{, }y\in 
\mathcal{H}\text{;}  \label{eqn1}
\end{equation}
(ii) strictly monotone if $T$ is monotone and
\begin{equation}
\left\langle Tx-Ty,x-y\right\rangle =0\text{ iff }x=y\text{;}  \label{eqn2}
\end{equation}
(iii) strongly monotone if there exists some constant $r>0$ such that
\begin{equation}
\left\langle Tx-Ty,x-y\right\rangle \geq r\left\Vert x-y\right\Vert ^{2}
\text{, for all }x\text{, }y\in \mathcal{H}\text{;}  \label{eqn3}
\end{equation}
(iv) strongly monotone w.r.t. $H$ if there exists some constant $\gamma >0$
such that
\begin{equation}
\left\langle Tx-Ty,Hx-Hy\right\rangle \geq \gamma \left\Vert x-y\right\Vert
^{2}\text{, for all }x\text{, }y\in \mathcal{H}\text{;}  \label{eqn4}
\end{equation}
(v) Lipschitz continouos if there exists some constant $s>0$ such that
\begin{equation}
\left\Vert Tx-Ty\right\Vert \leq s\left\Vert x-y\right\Vert \text{, for all }
x\text{, }y\in \mathcal{H}\text{.}  \label{eqn5}
\end{equation}
\end{definition}

\begin{remark}
(See \cite{Huang1}) $T$ and $H$ are $\tau $ and $s$-Lipschitz continuous,
repectively, and $T$ is strongly monotone w.r.t. $H$ with constant $\gamma $
, then $\gamma \leq \tau s$.
\end{remark}

\begin{definition}
(See \cite{ZGY}) Let $M:\mathcal{H\rightarrow }2^{\mathcal{H}}$ be a
multivalued-operator. $M$ is said to be\newline
(i) monotone if
\begin{equation}
\left\langle x-y,u-v\right\rangle \geq 0\text{, for all }u\text{, }v\in 
\mathcal{H}\text{, }x\in Mu\text{, }y\in Mv\text{;}  \label{eqn6}
\end{equation}
(ii) strongly monotone if there exists some constant $\eta >0$ such that
\begin{equation}
\left\langle x-y,u-v\right\rangle \geq \eta \left\Vert u-v\right\Vert ^{2}%
\text{, for all }u\text{, }v\in \mathcal{H}\text{, }x\in Mu\text{, }y\in Mv%
\text{;}  \label{eqn7}
\end{equation}
(iii) maximal monotone if $M$ is monotone and $\left( I+\lambda M\right)
\left( \mathcal{H}\right) =\mathcal{H}$ holds for all $\lambda >0$, where $I$
denotes the identity mapping on $H$;

(iv) maximal strongly monotone if $M$ is strongly monotone and $\left(
I+\lambda M\right) \left( \mathcal{H}\right) =\mathcal{H}$ holds for all $
\lambda >0$.
\end{definition}

\begin{remark}
(See \cite{ZGY}) Let $M:\mathcal{H\rightarrow }2^{\mathcal{H}}$ be a
multivalued-operator and denote graph of $M$ by Gr$\left( M\right) =\left\{
\left( u,x\right) \in \mathcal{H}\times \mathcal{H}:x\in Mu\right\} $. $M$
is maximal monotone if and only if $M$ is monotone and there is no other
monotone operator whose graph contains strictly the graph Gr$\left( M\right) 
$.
\end{remark}

The class of $H$-monotone operators above mentioned is defined as follows:

\begin{definition}
(See \cite{Huang1}) Let $H:\mathcal{H\rightarrow H}$ be a single-valued
operator and $M:\mathcal{H\rightarrow }2^{\mathcal{H}}$ a
multivalued-operator. $M$ is said to be

(i) $H$-monotone if $M$ is monotone and $\left( H+\lambda M\right) \left( 
\mathcal{H}\right) =\mathcal{H}$ holds for all $\lambda >0$;

(ii) strongly $H$-monotone if $M$ is strongly monotone and $\left( H+\lambda
M\right) \left( \mathcal{H}\right) =\mathcal{H}$ holds for all $\lambda >0$.
\end{definition}

\begin{corollary}
(See \cite{ZGY}) Let $H:\mathcal{H\rightarrow H}$ be a continuous and
strongly monotone single-valued operator and $M:\mathcal{H\rightarrow }2^{
\mathcal{H}}$ a multivalued-operator. Then $M$ is strongly $H$-monotone if
and only if $M$ is maximal strongly monotone.
\end{corollary}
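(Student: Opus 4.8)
The plan is to observe first that both properties in the statement include the requirement that $M$ be strongly monotone, so the whole content reduces to proving that, under the standing hypotheses, the two range conditions $(H+\lambda M)(\mathcal{H})=\mathcal{H}$ for all $\lambda>0$ and $(I+\lambda M)(\mathcal{H})=\mathcal{H}$ for all $\lambda>0$ are equivalent. Throughout I would keep in mind the classical fact (Minty's theorem, of which Remark~2 is the graph-theoretic face) that for a monotone $M$ one has $(I+\lambda M)(\mathcal{H})=\mathcal{H}$ for all $\lambda>0$ if and only if $M$ is maximal monotone. Thus it suffices to show that surjectivity of every $H+\lambda M$ is equivalent to maximal monotonicity of $M$.

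For the implication from strongly $H$-monotone to maximal strongly monotone, I would establish maximality of $M$ via Remark~2: suppose $(u_0,x_0)$ satisfies $\langle x_0-y,u_0-v\rangle\ge 0$ for every $(v,y)\in\mathrm{Gr}(M)$, and aim to conclude $x_0\in Mu_0$. Using surjectivity of $H+M$ at the point $Hu_0+x_0$, I would produce some $u^\ast$ and $x^\ast\in Mu^\ast$ with $Hu^\ast+x^\ast=Hu_0+x_0$, that is $x_0-x^\ast=Hu^\ast-Hu_0$. Feeding $(v,y)=(u^\ast,x^\ast)$ into the defining inequality and substituting this identity yields $\langle Hu^\ast-Hu_0,u^\ast-u_0\rangle\le 0$; since $H$ is strongly monotone this forces $u^\ast=u_0$ and then $x^\ast=x_0$, so $(u_0,x_0)\in\mathrm{Gr}(M)$. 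Hence $M$ admits no proper monotone extension and is maximal monotone, which together with strong monotonicity is exactly maximal strong monotonicity. Note that only the strong monotonicity of $H$ is used in this direction.

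For the converse, assume $M$ is maximal strongly monotone, so by Minty's theorem $M$ is maximal monotone. I would then invoke the sum theorem for maximal monotone operators: because $H$ is single-valued, continuous and everywhere defined, $H+\lambda M$ is again maximal monotone, and since $H$ is strongly monotone the sum is strongly monotone, hence coercive. A coercive maximal monotone operator on a Hilbert space is surjective, so $(H+\lambda M)(\mathcal{H})=\mathcal{H}$ for every $\lambda>0$, i.e. $M$ is strongly $H$-monotone. I expect this reverse direction to be the main obstacle, since it rests on the nontrivial maximal-monotone sum theorem (which is precisely where the continuity of $H$ enters) together with the coercivity-implies-surjectivity principle, whereas the forward implication is an elementary Minty-type computation relying only on the strong monotonicity of $H$.
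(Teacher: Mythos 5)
The paper itself contains no proof of this corollary: it is imported verbatim from Zeng, Guu and Yao \cite{ZGY}, so your argument can only be judged on its own terms, and on those terms it is correct. Your opening reduction --- both notions already contain strong monotonicity of $M$, so the whole content is the equivalence of the two surjectivity conditions --- is the right way to organize the proof. The forward implication is exactly the standard Minty-type computation: surjectivity of $H+M$ at the point $Hu_{0}+x_{0}$ together with strong monotonicity of $H$ forces any pair monotonically related to $\mathrm{Gr}(M)$ to lie in $\mathrm{Gr}(M)$, and Remark~2 then converts graph-maximality back into the range condition for $I+\lambda M$; note you only need $\lambda=1$ here, which is fine since the hypothesis gives all $\lambda>0$. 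Your diagnosis of the converse as the substantive half is also accurate: it rests on three standard but nontrivial facts --- an everywhere-defined continuous monotone single-valued map is maximal monotone, Rockafellar's sum theorem applies because $\operatorname{dom}H=\mathcal{H}$, and a coercive maximal monotone operator on a Hilbert space is surjective --- and this is essentially the route taken in \cite{ZGY}. One small observation: in that direction the strong monotonicity of $M$ plays no role (plain monotonicity of $M$ plus strong monotonicity of $H$ already makes $H+\lambda M$ coercive), so your argument in fact yields the slightly more general statement that a maximal monotone $M$ is $H$-monotone.
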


\begin{lemma}
(See \cite{ZGY}) Let $H:\mathcal{H\rightarrow H}$ be a continuous and
strongly monotone operator with constant $\gamma >0$, and $M:\mathcal{
H\rightarrow }2^{\mathcal{H}}$ a strongly $H$-monotone multivalued-operator
with constant $\eta >0$. Then the resolvent operator $R_{M,\lambda }^{H}:
\mathcal{H\rightarrow H}$ is defined by 
\begin{equation}
R_{M,\lambda }^{H}\left( u\right) =\left( H+\lambda M\right) ^{-1}\left(
u\right) \text{, for all }u\in \emph{~}\mathcal{H}\text{ and for some }
\lambda >0\text{,}  \label{eqn8}
\end{equation}
is $\left( \frac{1}{\gamma +\lambda \eta }\right) $-Lipschitzian continuous,
i.e.,
\begin{equation}
\left\Vert R_{M,\lambda }^{H}\left( u\right) -R_{M,\lambda }^{H}\left(
v\right) \right\Vert \leq \left( \frac{1}{\gamma +\lambda \eta }\right)
\left\Vert u-v\right\Vert \text{, for all }u\text{, }v\in \emph{~}\mathcal{H}%
\text{.}  \label{eqn9}
\end{equation}
\end{lemma}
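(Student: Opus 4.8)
The plan is to argue directly from the defining relation $R_{M,\lambda }^{H}\left( u\right) =\left( H+\lambda M\right) ^{-1}\left( u\right) $. First I would record that this operator is genuinely well defined: surjectivity of $H+\lambda M$ is exactly the range condition $\left( H+\lambda M\right) \left( \mathcal{H}\right) =\mathcal{H}$ built into the strong $H$-monotonicity hypothesis, so a resolvent value exists for every $u$; single-valuedness, which a priori is not automatic, will drop out of the very estimate I am about to derive. Fix $u$, $v\in \mathcal{H}$ and set $x=R_{M,\lambda }^{H}\left( u\right) $, $y=R_{M,\lambda }^{H}\left( v\right) $. Unwinding the definition gives $u\in Hx+\lambda Mx$ and $v\in Hy+\lambda My$, equivalently $\tfrac{1}{\lambda }\left( u-Hx\right) \in Mx$ and $\tfrac{1}{\lambda }\left( v-Hy\right) \in My$.

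The heart of the argument is to feed these two memberships into the strong monotonicity of $M$. Applying \eqref{eqn7} at the points $x$, $y$ with the selections above, and then clearing the positive factor $\lambda $, yields
\[
\left\langle \left( u-Hx\right) -\left( v-Hy\right) ,x-y\right\rangle \geq \lambda \eta \left\Vert x-y\right\Vert ^{2}.
\]
Rearranging isolates $\left\langle u-v,x-y\right\rangle \geq \left\langle Hx-Hy,x-y\right\rangle +\lambda \eta \left\Vert x-y\right\Vert ^{2}$. I would then invoke the strong monotonicity \eqref{eqn3} of $H$ with constant $\gamma $ to bound the inner-product term below by $\gamma \left\Vert x-y\right\Vert ^{2}$, which merges the two hypotheses additively into
\[
\left\langle u-v,x-y\right\rangle \geq \left( \gamma +\lambda \eta \right) \left\Vert x-y\right\Vert ^{2}.
\]

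To conclude, I would bound the left-hand side from above by Cauchy--Schwarz, $\left\langle u-v,x-y\right\rangle \leq \left\Vert u-v\right\Vert \left\Vert x-y\right\Vert $, and divide through by $\left\Vert x-y\right\Vert $ (the case $x=y$ being trivial) to reach $\left\Vert x-y\right\Vert \leq \tfrac{1}{\gamma +\lambda \eta }\left\Vert u-v\right\Vert $, which is precisely \eqref{eqn9}. Taking $u=v$ in the same chain forces $\left\Vert x-y\right\Vert =0$, so the resolvent is single-valued and the claim that $R_{M,\lambda }^{H}$ is a bona fide operator on $\mathcal{H}$ is secured. I do not anticipate a serious obstacle; the only points needing care are using the multivalued notion of strong monotonicity in Definition 2(ii) rather than the single-valued versions of Definition 1 when transferring the selections into \eqref{eqn7}, and noticing that the two constants combine as $\gamma +\lambda \eta $ (added, not multiplied) to produce the sharp Lipschitz factor. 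The continuity of $H$ assumed in the statement is not actually needed for this particular estimate.
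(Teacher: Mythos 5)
The paper states this lemma without proof, merely citing \cite{ZGY}, and your argument is correct and is essentially the standard one from that reference: insert the two resolvent memberships into the strong monotonicity of $M$, add the strong monotonicity of $H$ to get $\left\langle u-v,x-y\right\rangle \geq \left( \gamma +\lambda \eta \right) \left\Vert x-y\right\Vert ^{2}$, and finish with Cauchy--Schwarz. Your side remarks are also accurate: single-valuedness of the resolvent drops out of the same estimate with $u=v$, surjectivity of $H+\lambda M$ is exactly the range condition in the definition of strong $H$-monotonicity, and the continuity of $H$ plays no role in this particular bound.
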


Now let us consider the following general variational inclusion problem
which is discussed earlier in \cite{Huang1} and \cite{ZGY}: find $u\in 
\mathcal{H}$, such that
\begin{equation}
0\in A\left( u\right) +M\left( u\right) \text{,}  \label{eqn10}
\end{equation}
where $A$ and $H:\mathcal{H\rightarrow H}$ are two single-valued operator
and $M:\mathcal{H\rightarrow }2^{\mathcal{H}}$ is a multivalued-operator.

The following iterative algorithms in \cite{Huang1} and \cite{ZGY} are used to solve (10), respectively.

\textbf{Algorithm FH} (See \cite{Huang1}). The iterative sequence $\left\{
u_{n}\right\} $ $\subset \mathcal{H}$defined by
\begin{equation}
\left\{ 
\begin{array}{c}
u_{0}\in \mathcal{H}\text{, \ \ \ \ \ \ \ \ \ \ \ \ \ \ \ \ \ \ \ \ \ \ \ \
\ \ \ \ \ \ \ \ \ \ \ \ \ \ \ \ \ \ \ \ \ } \\ 
u_{n+1}=R_{M\text{,}\lambda }^{H}\left[ Hu_{n}-\lambda Au_{n}\right] \text{,
for all }n\in\mathbb{N}\text{.}
\end{array}
\right.  \label{eqn11}
\end{equation}
\textbf{Algorithm ZGY }(See \cite{ZGY}) The iterative sequence $\left\{q_{n}\right\}$ $\subset \mathcal{H}$ defined by 
\begin{equation}
\left\{ 
\begin{array}{c}
q_{0}\in \mathcal{H}\text{, \ \ \ \ \ \ \ \ \ \ \ \ \ \ \ \ \ \ \ \ \ \ \ \
\ \ \ \ \ \ \ \ \ \ \ \ \ \ \ \ \ \ \ \ \ \ \ \ \ \ \ \ \ \ \ \ \ \ \ \ \ \
\ \ \ \ \ \ \ \ } \\ 
q_{n+1}=\left( 1-\xi_{n}\right) q_{n}+\xi_{n}R_{M,\lambda }^{H}\left[
Hr_{n}-\lambda Ar_{n}\right] \text{, \ \ \ \ \ \ \ \ \ \ \ \ \ \ \ \ \ \ \ \
\ \ } \\ 
r_{n}=\left( 1-\mu_{n}\right) q_{n}+\mu_{n}R_{M,\lambda }^{H}\left[
Hq_{n}-\lambda Aq_{n}\right] \text{, for all }n\in\mathbb{N}\text{,}
\end{array}\right.  \label{eqn12}
\end{equation}
where $\left\{\xi_{n}\right\} $ and $\left\{\mu_{n}\right\}$ are real
sequences in $\left[ 0,1\right]$ satisfying certain control conditions.

\begin{lemma}
(See \cite{ZGY}) Let $H:\mathcal{H\rightarrow H}$ be a strongly monotone and
Lipschitz continuous operator with constants $\gamma >0$ and $L>0$,
respectively, $A:\mathcal{H\rightarrow H}$ a Lipschitz continuous and
strongly monotone w.r.t. $H$ with constants $s>0$ and $r>0$, respectively. Let $\left\{u_{n}\right\}$, 
$\left\{q_{n}\right\}$ be two iterative sequences defined by (11) and (12) with real sequences $\left\{\xi_{n}\right\}$ and $\left\{ \mu_{n}\right\}$ $\subset \left[0,1\right]$ satisfying the condition $\sum\nolimits_{n=0}^{\infty}\xi_{n}=\infty$. 
Suppose that $M:\mathcal{H\rightarrow }2^{\mathcal{H}}$ is a strongly $H$
-monotone operator with constant $\eta >0$. If there exists a constant $\lambda >0$ such that
\begin{equation}
\kappa=\sqrt{L^{2}-2\lambda r+\lambda ^{2}s^{2}}/\left( \gamma +\lambda \eta
\right) <1\text{,}  \label{eqn13}
\end{equation}
then the following properties hold:

(i) If we define
\begin{equation}
F\left( x\right) =R_{M\text{,}\lambda }^{H}\left[ Hx-\lambda Ax\right] \text{
, for all }x\in \mathcal{H}\text{,}  \label{eqn14}
\end{equation}
then it follows that
\begin{eqnarray}
\left\Vert F\left( x\right) -F\left( y\right) \right\Vert &=&\left\Vert R_{M
\text{,}\lambda }^{H}\left[ Hx-\lambda Ax\right] -R_{M\text{,}\lambda }^{H}
\left[ Hy-\lambda Ay\right] \right\Vert  \label{eqn15} \\
&\leq &\kappa\left\Vert x-y\right\Vert \text{, for all }x\text{, }y\in \mathcal{H}
\text{,}  \notag
\end{eqnarray}
where $\kappa$ satisfies (13);\\

(ii) $x^{\ast }\in \mathcal{H}$ is a unique solution to the problem (10);

(iii) $\left\{ u_{n}\right\}$ converges strongly to $x^{\ast }\in \mathcal{H}$;

(iv) An iterative sequence $\left\{\nu_{n}\right\} $ defined by
\begin{equation}
\left\{ 
\begin{array}{c}
\nu_{0}\in \mathcal{H}\text{,  \ \ \ \ \ \ \ \ \ \ \ \ \ \ \ \ \ \ \ \ \ \ \ \ \ \ \ \ \ \ \ \ \ \ \ \ \ \ \ \ \ \ \ \ \ \ \ \ \ \ \ \ \ \ \ \ \ \ \ \ \ \ \ \ \ \ \ \ \ \ \ \ \ \ \ \ }
\\ 
\nu_{n+1}=\left( 1-\xi_{n}\right) \nu_{n}+\xi_{n}R_{M\text{,}\lambda }^{H}\left[H\nu_{n}-\lambda A\nu_{n}\right] \text{, for all }n\in\mathbb{N}\text{.}
\end{array}
\right.  \label{eqn16}
\end{equation}
converges strongly to $x^{\ast }\in \mathcal{H}$;

(v) $\left\{q_{n}\right\}$ converges strongly to $x^{\ast }\in \mathcal{H}$.
\end{lemma}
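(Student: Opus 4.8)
The plan is to prove part (i) first, since the contraction bound $\kappa<1$ is what drives everything else, and then deduce (ii)--(v) by standard fixed-point and averaged-iteration arguments. For (i), I would start from $F(x)=R_{M,\lambda}^{H}[Hx-\lambda Ax]$ and apply the resolvent estimate (9) of Lemma 1 to obtain
\[
\|F(x)-F(y)\|\le \frac{1}{\gamma+\lambda\eta}\left\|(Hx-Hy)-\lambda(Ax-Ay)\right\|.
\]
The heart of the matter is bounding the right-hand factor. Expanding the square via the inner product gives
\[
\left\|(Hx-Hy)-\lambda(Ax-Ay)\right\|^{2}=\|Hx-Hy\|^{2}-2\lambda\langle Hx-Hy,Ax-Ay\rangle+\lambda^{2}\|Ax-Ay\|^{2},
\]
and feeding in the three structural hypotheses --- Lipschitz continuity of $H$ (constant $L$) for the first term, strong monotonicity of $A$ with respect to $H$ (constant $r$) for the middle inner product, and Lipschitz continuity of $A$ (constant $s$) for the last term --- yields
\[
\left\|(Hx-Hy)-\lambda(Ax-Ay)\right\|^{2}\le\left(L^{2}-2\lambda r+\lambda^{2}s^{2}\right)\|x-y\|^{2}.
\]
Taking square roots and recalling the definition of $\kappa$ in (13) produces exactly (15). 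This computation is the main (and essentially the only) obstacle; the rest is formal.

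For (ii), part (i) together with $\kappa<1$ says $F$ is a contraction on the complete space $\mathcal{H}$, so the Banach fixed-point theorem supplies a unique fixed point $x^{\ast}=F(x^{\ast})$. I would then unwind the resolvent identity $x^{\ast}=(H+\lambda M)^{-1}(Hx^{\ast}-\lambda Ax^{\ast})$, i.e.\ $Hx^{\ast}-\lambda Ax^{\ast}\in(H+\lambda M)(x^{\ast})$, to conclude $-Ax^{\ast}\in M(x^{\ast})$, which is precisely $0\in A(x^{\ast})+M(x^{\ast})$; uniqueness of the solution is inherited from uniqueness of the fixed point.

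Parts (iii)--(v) are routine iteration estimates built on (i). Since $u_{n+1}=F(u_{n})$ is the Picard iteration of a contraction, $\|u_{n+1}-x^{\ast}\|\le\kappa\|u_{n}-x^{\ast}\|$ gives geometric convergence for (iii). For (iv) and (v) I would write $x^{\ast}=(1-\xi_{n})x^{\ast}+\xi_{n}F(x^{\ast})$, apply the triangle inequality together with (i), and for (v) first absorb the inner step via $\|r_{n}-x^{\ast}\|\le\left(1-(1-\kappa)\mu_{n}\right)\|q_{n}-x^{\ast}\|\le\|q_{n}-x^{\ast}\|$. Both cases then reduce to the single recursion
\[
\|w_{n+1}-x^{\ast}\|\le\left(1-(1-\kappa)\xi_{n}\right)\|w_{n}-x^{\ast}\|.
\]
Iterating this and using $1-t\le e^{-t}$ bounds the error by $\exp\!\left(-(1-\kappa)\sum_{k=0}^{n-1}\xi_{k}\right)\|w_{0}-x^{\ast}\|$, which tends to $0$ because $1-\kappa>0$ and $\sum_{n}\xi_{n}=\infty$. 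This establishes strong convergence of $\{\nu_{n}\}$ and $\{q_{n}\}$ to $x^{\ast}$, completing all five parts.
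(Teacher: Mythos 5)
Your proposal is correct, but note that the paper itself gives no proof of this lemma: it is quoted verbatim from the reference [ZGY] (Zeng--Guu--Yao), so there is no in-paper argument to compare against. Your argument is the standard one and is in fact the same chain of estimates the author reuses later in the proof of Theorem 1 --- the resolvent Lipschitz bound $\frac{1}{\gamma+\lambda\eta}$ from Lemma 1 combined with the expansion of $\left\Vert (Hx-Hy)-\lambda(Ax-Ay)\right\Vert^{2}$ to get the contraction constant $\kappa$, Banach's fixed point theorem plus the resolvent identity for (ii), and the recursion $\left\Vert w_{n+1}-x^{\ast}\right\Vert\leq\left[1-\xi_{n}(1-\kappa)\right]\left\Vert w_{n}-x^{\ast}\right\Vert$ together with $1-t\leq e^{-t}$ and $\sum\xi_{n}=\infty$ for (iii)--(v). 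All five parts are handled completely and correctly.
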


For brevity's sake, the term converges hereinafter refers to converges strongly.\\
The following lemma states the equivalence of convergences for the iterative algorithms given in \cite{Huang1} and \cite{ZGY}. 
\begin{lemma}
(See \cite{Noor1}) Let $H$, $M$, $A$, $\kappa$ and $x^{\ast}$ be as in Lemma 2 and let $\left\{ u_{n}\right\}$, $\left\{q_{n}\right\}$, $\left\{\nu_{n}\right\}$ be the iterative sequences defined by (11), (12) and (16), respectively, with real sequences $\left\{\xi_{n}\right\} $, $\left\{\mu_{n}\right\} $ in $\left[
0,1\right] $ satisfying the conditions $\lim_{n\rightarrow \infty }\xi_{n}=0$
and $\sum_{n=0}^{\infty }\xi_{n}=\infty $. Then the following are
equivalent:\newline

(i) $\left\{u_{n}\right\}$ converges to $x^{\ast }\in \mathcal{H}$;

(ii) $\left\{q_{n}\right\}$ converges to $x^{\ast }\in \mathcal{H}$;

(iii) $\left\{\nu_{n}\right\}$ converges to $x^{\ast }\in \mathcal{H}$.
\end{lemma}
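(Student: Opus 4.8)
The plan is to reduce the whole statement to the contraction property (15). First I would record the fixed-point reformulation of (10): $u$ solves $0\in A(u)+M(u)$ if and only if $Hu-\lambda Au\in(H+\lambda M)(u)$, i.e. $u=R_{M,\lambda}^{H}[Hu-\lambda Au]=F(u)$; hence the unique solution $x^{\ast}$ is the unique fixed point of $F$, and by Lemma 2(i) the map $F$ is a $\kappa$-contraction with $\kappa<1$. The only analytic tool beyond this is the standard real-sequence lemma: if $a_{n}\geq 0$, $a_{n+1}\leq(1-\theta_{n})a_{n}+\theta_{n}b_{n}$ with $\theta_{n}\in[0,1]$, $\sum_{n}\theta_{n}=\infty$ and $b_{n}\to 0$, then $a_{n}\to 0$. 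My strategy is to prove the two equivalences (i)$\Leftrightarrow$(iii) and (ii)$\Leftrightarrow$(iii), from which (i)$\Leftrightarrow$(ii) follows; in each case I show that the norm of the difference of the two sequences concerned tends to $0$, so that convergence of either one forces convergence of the other to the common limit $x^{\ast}$.

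For (ii)$\Leftrightarrow$(iii) I would estimate $\Vert q_{n+1}-\nu_{n+1}\Vert$ by threading the contraction through the inner iterate $r_{n}$. Writing both recursions with the common weight $\xi_{n}$ and invoking (15),
\[
\Vert q_{n+1}-\nu_{n+1}\Vert\leq(1-\xi_{n})\Vert q_{n}-\nu_{n}\Vert+\xi_{n}\kappa\Vert r_{n}-\nu_{n}\Vert,
\]
and then bounding $\Vert r_{n}-\nu_{n}\Vert\leq(1-\mu_{n}(1-\kappa))\Vert q_{n}-\nu_{n}\Vert+\mu_{n}\Vert F(\nu_{n})-\nu_{n}\Vert\leq\Vert q_{n}-\nu_{n}\Vert+\Vert F(\nu_{n})-\nu_{n}\Vert$, where the factor $1-\mu_{n}(1-\kappa)\leq 1$ lets me discard the $\mu_{n}$-dependence cleanly. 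Substituting gives
\[
\Vert q_{n+1}-\nu_{n+1}\Vert\leq\bigl(1-(1-\kappa)\xi_{n}\bigr)\Vert q_{n}-\nu_{n}\Vert+(1-\kappa)\xi_{n}\,\tfrac{\kappa}{1-\kappa}\Vert F(\nu_{n})-\nu_{n}\Vert,
\]
which is exactly the form of the auxiliary lemma with $\theta_{n}=(1-\kappa)\xi_{n}\in[0,1]$ and $\sum_{n}\theta_{n}=(1-\kappa)\sum_{n}\xi_{n}=\infty$. Assuming (iii), continuity of $F$ together with $F(x^{\ast})=x^{\ast}$ gives $\Vert F(\nu_{n})-\nu_{n}\Vert\leq(1+\kappa)\Vert\nu_{n}-x^{\ast}\Vert\to 0$, so the residual tends to $0$ and hence $\Vert q_{n}-\nu_{n}\Vert\to 0$, yielding (ii). The converse (ii)$\Rightarrow$(iii) is symmetric: the same computation with the roles of $q_{n}$ and $\nu_{n}$ interchanged produces a residual controlled by $\Vert F(q_{n})-q_{n}\Vert\leq(1+\kappa)\Vert q_{n}-x^{\ast}\Vert\to 0$.

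The equivalence (i)$\Leftrightarrow$(iii) is lighter, since the Picard step $u_{n+1}=F(u_{n})$ is a pure contraction, giving $\Vert u_{n+1}-x^{\ast}\Vert\leq\kappa\Vert u_{n}-x^{\ast}\Vert$, while the Mann recursion (16) gives $\Vert\nu_{n+1}-x^{\ast}\Vert\leq(1-(1-\kappa)\xi_{n})\Vert\nu_{n}-x^{\ast}\Vert$; geometric decay in the first and $\sum_{n}\xi_{n}=\infty$ in the second each already force convergence to $x^{\ast}$, so (i) and (iii) are simultaneously valid and hence equivalent (one may equally run the difference argument $\Vert u_{n}-\nu_{n}\Vert\to 0$). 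I expect the main obstacle to be the nested two-step scheme (12): one must propagate the contraction estimate through $r_{n}$ and into $q_{n+1}$ while keeping the \emph{final} recursion in the canonical $(1-\theta_{n})a_{n}+\theta_{n}b_{n}$ shape with $\sum_{n}\theta_{n}=\infty$, which is what forces the careful absorption of the $\mu_{n}$-factor (via $1-\mu_{n}(1-\kappa)\leq 1$) and the correct identification of the vanishing residual. Everything else rests only on the contraction constant $\kappa<1$ from (13)--(15), the fixed-point identity $F(x^{\ast})=x^{\ast}$, and the divergence condition $\sum_{n}\xi_{n}=\infty$ (with $\xi_{n}\to 0$ among the standing hypotheses) feeding the auxiliary lemma.
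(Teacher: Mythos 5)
Your argument is correct, but a comparison is worth making because the paper itself offers no proof of this lemma: it is quoted verbatim from \cite{Noor1}, and the only internal analogue is the proof of Theorem 2, which establishes the corresponding equivalence for $\{q_{n}\}$ and $\{s_{n}\}$. Your treatment of (ii)$\Leftrightarrow$(iii) is essentially that same technique --- bound the difference of the two iterates by a recursion of the form $\sigma_{n+1}\leq(1-\epsilon_{n})\sigma_{n}+\rho_{n}$ with $\epsilon_{n}=(1-\kappa)\xi_{n}$, identify the residual $\rho_{n}$ with a multiple of $\Vert F(\nu_{n})-\nu_{n}\Vert$ (resp.\ $\Vert F(q_{n})-q_{n}\Vert$), and invoke Weng's lemma (Lemma 5 here); your absorption of the inner step via $1-\mu_{n}(1-\kappa)\leq 1$ is exactly how the paper handles $r_{n}$ in (30)--(33). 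One caveat: the two directions are not literally ``symmetric'' since only $\{q_{n}\}$ carries the inner iterate $r_{n}$, but your identification of the correct residual in each direction shows you have the right estimate, namely $\Vert r_{n}-\nu_{n}\Vert\leq\Vert q_{n}-\nu_{n}\Vert+\mu_{n}\Vert F(q_{n})-q_{n}\Vert$ for the converse. Where you genuinely depart from the paper (and from \cite{Noor1}) is the observation that under the standing hypotheses $F$ is a $\kappa$-contraction with $\kappa<1$, so by Lemma 2 (iii)--(v) all three sequences converge to $x^{\ast}$ unconditionally and the asserted equivalence is vacuously true; this is a sharper and more honest point than the difference argument, which the literature on such ``equivalence of convergence'' results tends to obscure. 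The condition $\lim_{n\to\infty}\xi_{n}=0$ is never used in either route, which is consistent with the paper's own Theorem 2, where it is likewise dropped.
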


Inspired by the performance and achievements of the above-mentioned
iterative algorithms, we now introduce our new iterative algorithm as follows:

\textbf{Algorithm }The iterative sequence $\left\{s_{n}\right\} $ $\subset 
\mathcal{H}$ defined by

\begin{equation}
\left\{ 
\begin{array}{c}
s_{0}\in \mathcal{H}\text{, \ \ \ \ \ \ \ \ \ \ \ \ \ \ \ \ \ \ \ \ \ \ \ \
\ \ \ \ \ \ \ \ \ \ \ \ \ \ \ \ \ \ \ \ \ \ \ \ \ \ \ \ \ \ \ \ \ \ \ \ \ \
\ \ \ \ \ \ \ } \\ 
s_{n+1}=R_{M,\lambda }^{H}\left[ Ht_{n}-\lambda At_{n}\right] \text{, \ \ \
\ \ \ \ \ \ \ \ \ \ \ \ \ \ \ \ \ \ \ \ \ \ \ \ \ \ \ \ \ \ \ \ \ \ \ \ \ \
\ \ } \\ 
t_{n}=\left( 1-\mu_{n}\right) s_{n}+\mu_{n}R_{M,\lambda }^{H}\left[
Hs_{n}-\lambda As_{n}\right] \text{, for all }n\in\mathbb{N}\text{.}
\end{array}\right.   \label{eqn17}
\end{equation}
where $\left\{\mu_{n}\right\}$ is real sequence in $\left[ 0,1\right] $
satisfying certain control conditions.

The following definitions and lemma will be very useful in obtaining the
main results of this paper.

\begin{definition}
(See \cite{Berinde}) Suppose that for two sequences $\left\{x_{n}\right\} _{n=0}^{\infty }$ and $\left\{y_{n}\right\}
_{n=0}^{\infty }$ both converging to the same point $p$, the following
error predictions are available:
\begin{equation}
\left\Vert x_{n}-p\right\Vert \leq \alpha_{n}\text{ for all }n\in\mathbb{N}\text{,}  \label{eqn18}
\end{equation}
\begin{equation}
\left\Vert y_{n}-p\right\Vert \leq \theta_{n}\text{ for all }n\in\mathbb{N}\text{,}  \label{eqn19}
\end{equation}
where $\left\{\alpha_{n}\right\} _{n=0}^{\infty }$ and $\left\{\theta_{n}\right\} _{n=0}^{\infty }$ are two sequences of positive numbers (converging to zero). If $\left\{\alpha_{n}\right\} _{n=0}^{\infty}$ converges
at a rate faster than $\left\{\theta_{n}\right\} _{n=0}^{\infty }$, then $\left\{
x_{n}\right\} _{n=0}^{\infty}$ converges at a rate faster than $\left\{y_{n}\right\}
_{n=0}^{\infty}$ to $p$.
\end{definition}

\begin{definition}
(See \cite{Berinde}) Let $\left\{\alpha_{n}\right\} _{n=0}^{\infty }$ and $
\left\{\theta_{n}\right\} _{n=0}^{\infty }$ be two sequences of real numbers
with limits $\alpha$ and $\theta$, respectively. Assume that there exists
\begin{equation}
\underset{n\rightarrow \infty }{\lim }\frac{\left\vert \alpha_{n}-\alpha\right\vert }{
\left\vert \theta_{n}-\theta\right\vert }=\ell\text{.}  \label{eqn20}
\end{equation}

(i) If $\ell=0$, then $\left\{\alpha_{n}\right\} _{n=0}^{\infty }$
converges faster to $\alpha$ than $\left\{\theta_{n}\right\} _{n=0}^{\infty}$ to $\theta$.

(ii) If $\ell\in \left( 0,\infty\right)$, then $\left\{\alpha_{n}\right\}
_{n=0}^{\infty }$ and $\left\{\theta_{n}\right\} _{n=0}^{\infty}$ have the same
rate of convergence.
\end{definition}

\begin{lemma}
(See \cite{Weng}) Let $\left\{\sigma_{n}\right\} _{n=0}^{\infty }$ and $
\left\{ \rho _{n}\right\} _{n=0}^{\infty }$$\subset\mathbb{R}^{+}\cup\left\{0\right\}$ be two sequences.
Assume that the following inequality is satisfied:
\begin{equation}
\sigma_{n+1}\leq \left( 1-\epsilon _{n}\right)\sigma_{n}+\rho _{n}\text{,}
\label{eqn21}
\end{equation}
where $\epsilon_{n}\in\left( 0,1\right)$, for all $n\geq n_{0}$, $\sum_{n=1}^{\infty }\epsilon_{n}=\infty$, and $\rho _{n}=o\left(
\epsilon_{n}\right) $. Then $\lim_{n\rightarrow \infty }\sigma_{n}=0$.
\end{lemma}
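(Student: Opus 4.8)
The plan is to prove this standard vanishing result by reducing the perturbed recursion \eqref{eqn21} to a purely geometric one, so that the divergence condition $\sum_{n=1}^{\infty}\epsilon_n=\infty$ forces the iterates down to zero. First I would fix an arbitrary $\varepsilon>0$ and exploit the hypothesis $\rho_n=o(\epsilon_n)$: since $\rho_n/\epsilon_n\to 0$, there is an index $N\geq n_0$ such that $\rho_n\leq\varepsilon\epsilon_n$ for every $n\geq N$. Substituting this into \eqref{eqn21} gives, for all $n\geq N$, the estimate $\sigma_{n+1}\leq(1-\epsilon_n)\sigma_n+\varepsilon\epsilon_n$, which I would then rewrite in the clean shifted form
\[
\sigma_{n+1}-\varepsilon\leq(1-\epsilon_n)(\sigma_n-\varepsilon).
\]

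Next I would iterate this inequality. Because each factor $1-\epsilon_n$ is strictly positive (as $\epsilon_n\in(0,1)$ for $n\geq n_0$), multiplying successive inequalities preserves their direction, and telescoping from $N$ to $n$ yields
\[
\sigma_{n+1}-\varepsilon\leq(\sigma_N-\varepsilon)\prod_{k=N}^{n}(1-\epsilon_k).
\]
The decisive analytic input is that this product tends to $0$: using the elementary bound $1-x\leq e^{-x}$ one obtains $\prod_{k=N}^{n}(1-\epsilon_k)\leq\exp\!\left(-\sum_{k=N}^{n}\epsilon_k\right)$, and the hypothesis $\sum_{n=1}^{\infty}\epsilon_n=\infty$ drives the exponent to $-\infty$, hence the product to $0$.

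From here I would briefly distinguish the sign of $\sigma_N-\varepsilon$, although both cases lead to the same conclusion. If $\sigma_N-\varepsilon\leq 0$, then the right-hand side above is nonpositive for every $n\geq N$, so $\sigma_{n+1}\leq\varepsilon$; if $\sigma_N-\varepsilon>0$, the right-hand side decreases to $0$, so again $\limsup_{n}\sigma_n\leq\varepsilon$. In either case $\limsup_{n\to\infty}\sigma_n\leq\varepsilon$. Since $\varepsilon>0$ was arbitrary, this gives $\limsup_{n\to\infty}\sigma_n\leq 0$, and combining with $\sigma_n\geq 0$ (the sequences lie in $\mathbb{R}^{+}\cup\{0\}$) we conclude $\lim_{n\to\infty}\sigma_n=0$.

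As for the main obstacle, the calculation itself is routine once the recursion is shifted, so the only genuinely delicate point is justifying that $\prod_{k=N}^{n}(1-\epsilon_k)\to 0$ from $\sum\epsilon_k=\infty$; I expect the bound $1-x\leq e^{-x}$ to dispatch this cleanly. The only other care needed is the mild book-keeping in the case $\sigma_N<\varepsilon$, where the shifted quantity starts out negative and one cannot read off decay directly from the product estimate, but must instead observe that the bound stays nonpositive.
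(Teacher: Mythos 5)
Your argument is correct: the shift $\sigma_{n+1}-\varepsilon\leq(1-\epsilon_n)(\sigma_n-\varepsilon)$, the telescoping (legitimate since each factor $1-\epsilon_n$ is positive), the bound $1-x\leq e^{-x}$ together with $\sum\epsilon_n=\infty$, and the final $\limsup\sigma_n\leq\varepsilon$ for arbitrary $\varepsilon>0$ all hold, and you correctly handle the sign of $\sigma_N-\varepsilon$. The paper itself gives no proof of this lemma --- it is quoted from Weng's 1991 article --- and your argument is essentially the standard one found there, so there is nothing to reconcile.
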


\section{Main Results}

\begin{theorem}
Let $H:\mathcal{H\rightarrow H}$ be a strongly monotone and Lipschitz
continuous operator with constants $\gamma $ and $\tau $, respectively. Let $
A:\mathcal{H\rightarrow H}$ be Lipschitz continuous and strongly monotone
with respect to $H$ with constants $s$ and $r$, respectively. Suppose that $
M:\mathcal{H\rightarrow }2^{\mathcal{H}}$ a strongly $H$-monotone operator
with constant $\eta>0$. If there exists a constant $\lambda>0$ such that
\begin{eqnarray}
\left\vert \lambda -\frac{r+\gamma \eta }{s^{2}-\eta ^{2}}\right\vert &<&
\frac{\sqrt{\left( r+\gamma \eta \right) ^{2}-\left( s^{2}-\eta ^{2}\right)
\left( \tau ^{2}-\gamma ^{2}\right) }}{s^{2}-\eta ^{2}}\text{,}
\label{eqn22} \\
\left( r+\gamma \eta \right) ^{2} &>&\left( s^{2}-\eta ^{2}\right) \left(
\tau ^{2}-\gamma ^{2}\right) \text{, }s>\eta \text{.}  \notag
\end{eqnarray}
Then the iterative sequence $\{s_{n}\}$ defined by (17) with real sequence 
$\left\{\mu_{n}\right\} $ $\subset \left[ 0,1\right] $ satisfying $
\sum_{n=0}^{\infty }\mu_{n}=\infty$ converges to the
unique solution $x^{\ast}$ of the variational inclusion problem (1.10).
\end{theorem}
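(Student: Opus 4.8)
The plan is to recognize the new algorithm (17) as a two-step fixed-point iteration for the operator $F$ of (14) and to reduce its convergence to the contraction property already recorded in Lemma 2. First I would check that the hypotheses (22) are merely a reformulation of the contraction condition $\kappa<1$, where $\kappa=\sqrt{\tau^{2}-2\lambda r+\lambda^{2}s^{2}}/\left(\gamma+\lambda\eta\right)$ (Lemma 2 with $L$ replaced by $\tau$). Writing $\kappa<1$ as $\sqrt{\tau^{2}-2\lambda r+\lambda^{2}s^{2}}<\gamma+\lambda\eta$ and squaring both sides (legitimate since both are positive) yields the quadratic inequality
\[
\left(s^{2}-\eta^{2}\right)\lambda^{2}-2\left(r+\gamma\eta\right)\lambda+\left(\tau^{2}-\gamma^{2}\right)<0 .
\]
Because $s>\eta$ the leading coefficient is positive, so the solution set is exactly the open interval between the two real roots; solving and completing the square shows this interval is precisely the one described by (22), with the side condition $\left(r+\gamma\eta\right)^{2}>\left(s^{2}-\eta^{2}\right)\left(\tau^{2}-\gamma^{2}\right)$ guaranteeing the discriminant is positive and the interval nonempty. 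Hence (22) is equivalent to $\kappa<1$, and Lemma 2 then supplies the unique solution $x^{\ast}$ of (10) together with the fixed-point identity $x^{\ast}=F\left(x^{\ast}\right)$ and the contraction estimate (15).

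Next I would rewrite (17) compactly as $s_{n+1}=F\left(t_{n}\right)$ and $t_{n}=\left(1-\mu_{n}\right)s_{n}+\mu_{n}F\left(s_{n}\right)$. Using $x^{\ast}=F\left(x^{\ast}\right)$ and the estimate (15), the inner step gives
\[
\left\Vert t_{n}-x^{\ast}\right\Vert \leq\left(1-\mu_{n}\right)\left\Vert s_{n}-x^{\ast}\right\Vert +\mu_{n}\kappa\left\Vert s_{n}-x^{\ast}\right\Vert =\left(1-\left(1-\kappa\right)\mu_{n}\right)\left\Vert s_{n}-x^{\ast}\right\Vert ,
\]
and feeding this into the outer step produces
\[
\left\Vert s_{n+1}-x^{\ast}\right\Vert =\left\Vert F\left(t_{n}\right)-F\left(x^{\ast}\right)\right\Vert \leq\kappa\left\Vert t_{n}-x^{\ast}\right\Vert \leq\kappa\left(1-\left(1-\kappa\right)\mu_{n}\right)\left\Vert s_{n}-x^{\ast}\right\Vert .
\]

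Finally I would set $\sigma_{n}=\left\Vert s_{n}-x^{\ast}\right\Vert$ and conclude. Since $0<\kappa<1$ and $1-\left(1-\kappa\right)\mu_{n}\in\left(0,1\right]$, the recursion is already in the form demanded by Lemma 4 with $\epsilon_{n}=\left(1-\kappa\right)\mu_{n}$ and $\rho_{n}=0=o\left(\epsilon_{n}\right)$; the hypothesis $\sum_{n=0}^{\infty}\mu_{n}=\infty$ forces $\sum_{n}\epsilon_{n}=\left(1-\kappa\right)\sum_{n}\mu_{n}=\infty$, whence $\sigma_{n}\to0$, that is, $s_{n}\to x^{\ast}$. (Equivalently, discarding the factor $1-\left(1-\kappa\right)\mu_{n}\leq1$ already gives the geometric bound $\sigma_{n+1}\leq\kappa\sigma_{n}$, hence $\sigma_{n}\leq\kappa^{n}\sigma_{0}\to0$ directly.) The only step requiring genuine care is the algebraic equivalence of (22) and $\kappa<1$ in the first paragraph — in particular verifying that $s>\eta$ is exactly what makes the quadratic open upward, so that the between-the-roots region is the correct solution set; everything afterward is routine contraction bookkeeping.
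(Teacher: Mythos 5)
Your proposal is correct, and the core of it --- the two-step contraction estimate $\left\Vert t_{n}-x^{\ast}\right\Vert \leq\left[1-\mu_{n}\left(1-\kappa\right)\right]\left\Vert s_{n}-x^{\ast}\right\Vert$ followed by $\left\Vert s_{n+1}-x^{\ast}\right\Vert \leq\kappa\left[1-\mu_{n}\left(1-\kappa\right)\right]\left\Vert s_{n}-x^{\ast}\right\Vert$ --- is exactly the paper's recursion (27). You diverge in two places, both to your advantage. First, you actually verify that the hypothesis (22) is equivalent to $\kappa<1$ by squaring and locating $\lambda$ between the roots of $\left(s^{2}-\eta^{2}\right)\lambda^{2}-2\left(r+\gamma\eta\right)\lambda+\left(\tau^{2}-\gamma^{2}\right)<0$; the paper simply asserts $\kappa<1$ and outsources existence and uniqueness to Zeng--Guu--Yao, so your paragraph fills a gap the paper leaves implicit. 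Second, the tail of the argument: the paper iterates (27) into the product bound $\left\Vert s_{n+1}-x^{\ast}\right\Vert \leq\left\Vert s_{0}-x^{\ast}\right\Vert \kappa^{n+1}\prod_{i=0}^{n}\left[1-\mu_{i}\left(1-\kappa\right)\right]$ and then uses $1-a\leq e^{-a}$ together with $\sum\mu_{i}=\infty$, whereas you invoke Lemma 4 with $\rho_{n}=0$ or, better, just discard the factor $1-\mu_{n}\left(1-\kappa\right)\leq1$ to get $\sigma_{n+1}\leq\kappa\sigma_{n}$ and geometric decay. Your parenthetical observation is the cleanest route and makes plain that $\sum\mu_{n}=\infty$ is not actually needed for convergence (it matters only for the rate comparison in Theorem 3); it also sidesteps the minor technical wrinkle that Lemma 4 requires $\epsilon_{n}=\left(1-\kappa\right)\mu_{n}$ to lie in $\left(0,1\right)$, which fails whenever $\mu_{n}=0$. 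One small caution: make sure you state why $\gamma+\lambda\eta>0$ and $\tau^{2}-2\lambda r+\lambda^{2}s^{2}\geq0$ before squaring, but this is routine given the stated positivity of the constants.
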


\begin{proof}
Existence and uniqueness of $x^{\ast}$come from (\cite{ZGY}, Theorem 3.1).
For completion of the proof it just will be shown that the sequence $\{s_{n}\}$ converges to $x^{\ast}$.

Utilizing (9), (14), and (17), we obtain 
\begin{eqnarray}
\left\Vert s_{n+1}-x^{\ast }\right\Vert  &=&\left\Vert R_{M,\lambda }^{H}
\left[ Ht_{n}-\lambda At_{n}\right] -R_{M,\lambda }^{H}\left[ Hx^{\ast
}-\lambda Ax^{\ast }\right] \right\Vert   \notag \\
&\leq &\frac{1}{\gamma +\lambda \eta }\left\Vert Ht_{n}-Hx^{\ast }-\lambda
\left( At_{n}-Ax^{\ast }\right) \right\Vert \text{.}  \label{eqn23}
\end{eqnarray}
It follows from the assumptions that
\begin{eqnarray}
\left\Vert Ht_{n}-Hx^{\ast }-\lambda \left( At_{n}-Ax^{\ast }\right)
\right\Vert ^{2} &=&\left\Vert Ht_{n}-Hx^{\ast }\right\Vert ^{2}-2\lambda
\left\langle At_{n}-Ax^{\ast },Ht_{n}-Hx^{\ast }\right\rangle   \notag \\
&&+\lambda ^{2}\left\Vert At_{n}-Ax^{\ast }\right\Vert ^{2}  \notag \\
&\leq &\tau ^{2}\left\Vert t_{n}-x^{\ast }\right\Vert ^{2}-2\lambda
r\left\Vert t_{n}-x^{\ast }\right\Vert ^{2}  \notag \\
&&+\lambda ^{2}s^{2}\left\Vert t_{n}-x^{\ast }\right\Vert ^{2}  \notag \\
&=&\left( \tau ^{2}-2\lambda r+\lambda ^{2}s^{2}\right) \left\Vert
t_{n}-x^{\ast }\right\Vert ^{2}\text{,}  \label{eqn24}
\end{eqnarray}
or
\begin{equation}
\left\Vert Ht_{n}-Hx^{\ast }-\lambda \left( At_{n}-Ax^{\ast }\right)
\right\Vert \leq \sqrt{\tau ^{2}-2\lambda r+\lambda ^{2}s^{2}}\left\Vert
t_{n}-x^{\ast }\right\Vert \text{.}  \label{eqn25}
\end{equation}
Using again (9), (14), and (1.17), we have
\begin{eqnarray}
\left\Vert t_{n}-x^{\ast }\right\Vert  &=&\left\Vert \left( 1-\mu
_{n}\right) \left( s_{n}-x^{\ast }\right) +\mu_{n}\left( R_{M,\lambda
}^{H}\left[ Hs_{n}-\lambda As_{n}\right] -R_{M,\lambda }^{H}\left[ Hx^{\ast
}-\lambda Ax^{\ast }\right] \right) \right\Vert   \notag \\
&\leq &\left( 1-\mu_{n}\right) \left\Vert s_{n}-x^{\ast }\right\Vert
+\mu_{n}\left\Vert R_{M,\lambda }^{H}\left[ Hs_{n}-\lambda As_{n}\right]
-R_{M,\lambda }^{H}\left[ Hx^{\ast }-\lambda Ax^{\ast }\right] \right\Vert  
\notag \\
&\leq &\left( 1-\mu_{n}\right) \left\Vert s_{n}-x^{\ast }\right\Vert
+\mu_{n}\frac{1}{\gamma +\lambda \eta }\left\Vert Hs_{n}-Hx^{\ast
}-\lambda \left( As_{n}-Ax^{\ast }\right) \right\Vert   \notag \\
&\leq &\left[ 1-\mu_{n}\left( 1-\kappa\right) \right] \left\Vert
s_{n}-x^{\ast }\right\Vert \text{,}  \label{eqn26}
\end{eqnarray}
where $\kappa=\frac{1}{\gamma +\lambda \eta }\sqrt{\tau ^{2}-2\lambda r+\lambda
^{2}s^{2}}<1$.

Combining (23), (25), and (26)
\begin{equation}
\left\Vert s_{n+1}-x^{\ast }\right\Vert \leq \kappa\left[ 1-\mu_{n}\left(
1-\kappa\right) \right] \left\Vert s_{n}-x^{\ast }\right\Vert \text{.}
\label{eqn27}
\end{equation}
Inductively
\begin{equation}
\left\Vert s_{n+1}-x^{\ast }\right\Vert \leq \left\Vert s_{0}-x^{\ast
}\right\Vert \kappa^{n+1}\prod_{i=0}^{n}\left[ 1-\mu_{i}\left(
1-\kappa\right) \right] \text{.}  \label{eqn28}
\end{equation}
It is well-known from the classical analysis that $1-a\leq e^{-a}$ for all $
a\in \left[ 0,1\right]$. Hence, (28) becomes
\begin{equation}
\left\Vert s_{n+1}-x^{\ast }\right\Vert \leq \left\Vert s_{0}-x^{\ast
}\right\Vert \kappa^{n+1}e^{-\left( 1-\kappa\right) \sum_{i=0}^{n}\mu
_{i}}\text{.}  \label{eqn29}
\end{equation}
It follows from the assumption $\sum_{i=0}^{\infty }\mu
_{i}=\infty $ that $e^{-\left( 1-\kappa\right) \sum_{i=0}^{n}\mu
_{i}}\rightarrow 0$ as $n\rightarrow \infty $, which implies $
\lim_{n\rightarrow \infty }\left\Vert s_{n}-x^{\ast }\right\Vert =0$.
\end{proof}

\begin{theorem}
Let $H$, $M$, $A$, $\kappa$, and $x^{\ast}$ be defined as in Theorem 1. Let $\{q_{n}\}$, $\{s_{n}\}$
be two iterative sequence generated by (12) and (17), respectively, with
real sequences $\{\xi_{n}\}$, $\left\{\mu_{n}\right\} $ $\subset \left[
0,1\right] $ satisfying the condition $\sum\limits_{n=0}^{\infty
}\xi_{n}\mu_{n}\left( 1-\kappa\right) =\infty $. Then the following are
equivalent:

(i) $\{q_{n}\}$ converges to $x^{\ast }\in \mathcal{H}$. 

(ii) $\{s_{n}\}$ converges to $x^{\ast }\in \mathcal{H}$. 
\end{theorem}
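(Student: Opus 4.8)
The plan is to reduce both algorithms to the contraction $F(x)=R_{M,\lambda}^{H}\left[Hx-\lambda Ax\right]$ furnished by Lemma 2, which by (15) satisfies $\left\Vert F(x)-F(y)\right\Vert\le\kappa\left\Vert x-y\right\Vert$ and whose fixed point is precisely the solution, $x^{\ast}=F(x^{\ast})$ (this is the fixed-point reformulation of (10) underlying all three algorithms). Written this way, (12) becomes $q_{n+1}=(1-\xi_{n})q_{n}+\xi_{n}F(r_{n})$ with $r_{n}=(1-\mu_{n})q_{n}+\mu_{n}F(q_{n})$, and (17) becomes $s_{n+1}=F(t_{n})$ with $t_{n}=(1-\mu_{n})s_{n}+\mu_{n}F(s_{n})$. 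The first thing I would record is the key elementary observation: since $\xi_{n},\mu_{n}\in\left[0,1\right]$ and $0<\kappa<1$, we have $\xi_{n}\mu_{n}(1-\kappa)\le\xi_{n}$ and $\xi_{n}\mu_{n}(1-\kappa)\le\mu_{n}$, so the hypothesis $\sum_{n=0}^{\infty}\xi_{n}\mu_{n}(1-\kappa)=\infty$ forces \emph{both} $\sum\xi_{n}=\infty$ and $\sum\mu_{n}=\infty$. This single remark is what drives both implications.

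For the implication (ii)$\Rightarrow$(i) I would estimate $\{q_{n}\}$ directly. Just as in (26), the inner step gives $\left\Vert r_{n}-x^{\ast}\right\Vert\le\left[1-\mu_{n}(1-\kappa)\right]\left\Vert q_{n}-x^{\ast}\right\Vert$, and substituting this into the outer step together with $x^{\ast}=F(x^{\ast})$ and (15) yields
\begin{equation*}
\left\Vert q_{n+1}-x^{\ast}\right\Vert\le\left[1-\xi_{n}(1-\kappa)(1+\kappa\mu_{n})\right]\left\Vert q_{n}-x^{\ast}\right\Vert .
\end{equation*}
The bracketed factor lies in $\left[0,1-\kappa^{2}\right]\subset\left[0,1\right)$, so iterating exactly as in the passage from (28) to (29) and using $1-a\le e^{-a}$ gives $\left\Vert q_{n+1}-x^{\ast}\right\Vert\le\left\Vert q_{0}-x^{\ast}\right\Vert\exp\!\left(-(1-\kappa)\sum_{i=0}^{n}\xi_{i}(1+\kappa\mu_{i})\right)$. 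Since $\xi_{i}(1+\kappa\mu_{i})\ge\xi_{i}$ and $\sum\xi_{i}=\infty$ by the preliminary remark, the exponent tends to $-\infty$ and hence $q_{n}\to x^{\ast}$. The reverse implication (i)$\Rightarrow$(ii) is essentially Theorem 1 replayed: inequality (27) gives $\left\Vert s_{n+1}-x^{\ast}\right\Vert\le\kappa\left[1-\mu_{n}(1-\kappa)\right]\left\Vert s_{n}-x^{\ast}\right\Vert$, and iterating with $1-a\le e^{-a}$ together with $\sum\mu_{n}=\infty$ forces $s_{n}\to x^{\ast}$. Combining the two directions establishes the equivalence. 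As an alternative route one could invoke Lemma 4 (Weng) on either recursion with $\rho_{n}=0$, but one must then check that the decay coefficients stay strictly below $1$ and that their sum diverges, which is the same computation.

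The apparent obstacle, and the point I would flag, is the structural mismatch between the two algorithms: the new scheme is Picard-type ($s_{n+1}=F(t_{n})$, a full application of $F$), whereas Algorithm ZGY is a Mann-type average ($q_{n+1}=(1-\xi_{n})q_{n}+\xi_{n}F(r_{n})$). Because of this mismatch a \v{S}oltuz-style direct bound on $\left\Vert q_{n}-s_{n}\right\Vert$ does not telescope cleanly, so I would deliberately avoid it and instead drive each sequence to $x^{\ast}$ on its own terms. The only genuinely delicate checks are then the two verifications noted above: that the $\{q_{n}\}$-decay coefficient $\xi_{n}(1-\kappa)(1+\kappa\mu_{n})$ remains in $\left[0,1\right)$ (so the telescoped product is a valid upper bound), and that the product control condition $\sum\xi_{n}\mu_{n}(1-\kappa)=\infty$ is strong enough to make both $\sum\xi_{n}$ and $\sum\mu_{n}$ diverge, which is exactly what the elementary inequalities $\xi_{n}\mu_{n}(1-\kappa)\le\xi_{n}$ and $\xi_{n}\mu_{n}(1-\kappa)\le\mu_{n}$ supply.
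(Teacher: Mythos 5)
Your argument is correct, but it takes a genuinely different route from the paper. The paper follows the standard \v{S}oltuz-type template: it estimates the difference $\left\Vert q_{n+1}-s_{n+1}\right\Vert$ directly from the two recursions, arrives at an inequality of the form $\sigma_{n+1}\leq\left(1-\epsilon_{n}\right)\sigma_{n}+\rho_{n}$ with $\sigma_{n}=\left\Vert q_{n}-s_{n}\right\Vert$, $\epsilon_{n}=\mu_{n}\left(1-\kappa\right)$ in one direction and $\epsilon_{n}=\xi_{n}\mu_{n}\left(1-\kappa\right)$ in the other, and $\rho_{n}$ a multiple of the distance of the assumed-convergent sequence to $x^{\ast}$; it then invokes Lemma 4 (Weng) to force $\sigma_{n}\rightarrow 0$ and finishes with the triangle inequality. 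You instead observe that $\xi_{n}\mu_{n}\left(1-\kappa\right)\leq\xi_{n}$ and $\xi_{n}\mu_{n}\left(1-\kappa\right)\leq\mu_{n}$ force both $\sum\xi_{n}=\infty$ and $\sum\mu_{n}=\infty$, and then prove \emph{unconditionally} that each sequence converges to $x^{\ast}$ (your contraction estimate $\left\Vert q_{n+1}-x^{\ast}\right\Vert\leq\left[1-\xi_{n}\left(1-\kappa\right)\left(1+\kappa\mu_{n}\right)\right]\left\Vert q_{n}-x^{\ast}\right\Vert$ checks out, and for $\left\{s_{n}\right\}$ the factor $\kappa^{n+1}$ alone already suffices), so the equivalence holds because both statements are simply true. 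Your route is more elementary — it avoids Lemma 4 entirely — and it makes explicit something the paper leaves implicit: under the hypotheses inherited from Theorem 1 (in particular $\kappa<1$), the equivalence is vacuous, since Lemma 2 and Theorem 1 already guarantee convergence of both schemes. What the paper's approach buys in exchange is an actual estimate on $\left\Vert q_{n}-s_{n}\right\Vert$ and a proof pattern that would survive in settings where direct convergence of one of the sequences is not available; your shortcut works here precisely because the contraction condition makes it unnecessary. One small caveat: your remark that the difference bound ``does not telescope cleanly'' slightly mischaracterizes the paper, which handles the non-telescoping term via Weng's lemma rather than by telescoping; this does not affect the validity of your own argument.
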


\begin{proof}
We will prove (i)$\Rightarrow $(ii), that is, if $
\{q_{n}\}$ converges to $x^{\ast }$, then $\{s_{n}\}$
does too.

It derives from (12), (14), (15), and (17) that
\begin{eqnarray}
\left\Vert q_{n+1}-s_{n+1}\right\Vert  &=&\left\Vert \left( 1-\xi_{n}\right)
q_{n}+\xi_{n}R_{M,\lambda }^{H}\left[ Hr_{n}-\lambda Ar_{n}\right]
-R_{M,\lambda }^{H}\left[ Ht_{n}-\lambda At_{n}\right] \right\Vert   \notag
\\
&\leq &\left( 1-\xi_{n}\right) \left\Vert q_{n}-R_{M,\lambda }^{H}\left[
Ht_{n}-\lambda At_{n}\right] \right\Vert   \notag \\
&&+\xi_{n}\left\Vert R_{M,\lambda }^{H}\left[ Hr_{n}-\lambda Ar_{n}\right]
-R_{M,\lambda }^{H}\left[ Ht_{n}-\lambda At_{n}\right] \right\Vert   \notag
\\
&=&\left( 1-\xi_{n}\right) \left\Vert q_{n}-F\left( t_{n}\right) \right\Vert
+\xi_{n}\left\Vert F\left(r_{n}\right) -F\left( t_{n}\right) \right\Vert  
\notag \\
&\leq &\left( 1-\xi_{n}\right) \left\{ \left\Vert q_{n}-x^{\ast }\right\Vert
+\left\Vert F\left( x^{\ast }\right) -F\left( t_{n}\right) \right\Vert
\right\} +\xi_{n}\left\Vert F\left(r_{n}\right) -F\left( t_{n}\right)
\right\Vert   \notag \\
&\leq &\left( 1-\xi_{n}\right) \left\{ \left\Vert q_{n}-x^{\ast }\right\Vert
+\kappa\left\Vert x^{\ast}-t_{n}\right\Vert \right\} +\xi_{n}\kappa\left\Vert
r_{n}-t_{n}\right\Vert   \notag \\
&\leq &\left( 1-\xi_{n}\right) \left\Vert q_{n}-x^{\ast }\right\Vert +\left(
1-\xi_{n}\right) \kappa\left\Vert r_{n}-x^{\ast }\right\Vert   \notag \\
&&+\left( 1-\xi_{n}\right) \kappa\left\Vert r_{n}-t_{n}\right\Vert
+\xi_{n}\kappa\left\Vert r_{n}-t_{n}\right\Vert   \notag \\
&=&\left( 1-\xi_{n}\right) \left\Vert q_{n}-x^{\ast }\right\Vert +\left(
1-\xi_{n}\right) \kappa\left\Vert r_{n}-x^{\ast }\right\Vert +\kappa\left\Vert
r_{n}-t_{n}\right\Vert \text{,}  \label{eqn30}
\end{eqnarray}
\begin{eqnarray}
\left\Vert r_{n}-x^{\ast }\right\Vert  &=&\left\Vert \left( 1-\mu
_{n}\right) q_{n}+\mu_{n}R_{M,\lambda }^{H}\left[ Hq_{n}-\lambda Aq_{n}
\right] -x^{\ast }\right\Vert   \notag \\
&\leq &\left( 1-\mu_{n}\right) \left\Vert q_{n}-x^{\ast }\right\Vert
+\mu_{n}\left\Vert R_{M,\lambda }^{H}\left[ Hq_{n}-\lambda Aq_{n}\right]
-R_{M,\lambda }^{H}\left[ Hx^{\ast }-\lambda Ax^{\ast }\right] \right\Vert  
\notag \\
&=&\left( 1-\mu_{n}\right) \left\Vert q_{n}-x^{\ast }\right\Vert +\mu
_{n}\left\Vert F\left(q_{n}\right) -F\left( x^{\ast }\right) \right\Vert  
\notag \\
&\leq &\left( 1-\mu_{n}\right) \left\Vert q_{n}-x^{\ast }\right\Vert
+\mu_{n}\kappa\left\Vert q_{n}-x^{\ast }\right\Vert   \notag \\
&=&\left[ 1-\mu_{n}\left( 1-\kappa\right) \right] \left\Vert q_{n}-x^{\ast
}\right\Vert \text{,}  \label{eqn31}
\end{eqnarray}
\begin{eqnarray}
\left\Vert r_{n}-t_{n}\right\Vert  &=&\left\Vert \left( 1-\mu_{n}\right)
\left( q_{n}-s_{n}\right) +\mu_{n}\left( R_{M,\lambda }^{H}\left[
Hq_{n}-\lambda Aq_{n}\right] -R_{M,\lambda }^{H}\left[ Hs_{n}-\lambda As_{n}
\right] \right) \right\Vert   \notag \\
&\leq &\left( 1-\mu_{n}\right) \left\Vert q_{n}-s_{n}\right\Vert +\mu
_{n}\left\Vert F\left(q_{n}\right) -F\left( s_{n}\right) \right\Vert  
\notag \\
&\leq &\left[ 1-\mu_{n}\left( 1-\kappa\right) \right] \left\Vert
q_{n}-s_{n}\right\Vert \text{.}  \label{eqn32}
\end{eqnarray}
Combining (30), (31), and (32) and using the fact $\kappa\in \left(0,1\right)$ that
\begin{eqnarray}
\left\Vert q_{n+1}-s_{n+1}\right\Vert  &\leq &\left( 1-\xi_{n}\right)
\left\Vert q_{n}-x^{\ast }\right\Vert +\left( 1-\xi_{n}\right) \kappa\left[
1-\mu_{n}\left( 1-\kappa\right) \right] \left\Vert q_{n}-x^{\ast }\right\Vert 
\notag \\
&&+k\left[ 1-\mu_{n}\left( 1-\kappa\right) \right] \left\Vert
q_{n}-s_{n}\right\Vert   \notag \\
&\leq &\left[ 1-\mu_{n}\left( 1-\kappa\right) \right] \left\Vert
q_{n}-s_{n}\right\Vert   \notag \\
&&+\left( 1-\xi_{n}\right) \left\{ 1+\kappa\left[ 1-\mu_{n}\left(1-\kappa\right)
\right] \right\} \left\Vert q_{n}-x^{\ast }\right\Vert \text{.}
\label{eqn33}
\end{eqnarray}
Denote
\begin{eqnarray*}
\sigma_{n} &=&\left\Vert q_{n}-s_{n}\right\Vert \text{,} \\
\epsilon_{n} &=&\mu_{n}\left(1-\kappa\right) \in \left( 0,1\right) \text{,} \\
\rho _{n} &=&\left( 1-\xi_{n}\right) \left\{ 1+\kappa\left[ 1-\mu_{n}\left(
1-\kappa\right) \right] \right\} \left\Vert q_{n}-x^{\ast }\right\Vert \text{.}
\end{eqnarray*}
Then (33) becomes
\begin{equation}
\sigma_{n+1}\leq \left( 1-\epsilon_{n}\right) \sigma_{n}+\rho _{n}\text{, }
n\geq 0\text{.}  \label{eqn34}
\end{equation}
As $\lim_{n\rightarrow \infty }\left\Vert q_{n}-x^{\ast }\right\Vert =0$, 
$\rho _{n}=o\left(\epsilon_{n}\right) $. Also, since $\xi_{n}$, $\mu
_{n}\in $ $\left[ 0,1\right] $ for all $n\in\mathbb{N}$ and $\kappa\in\left(0,1\right)$, we have
\begin{equation*}
\xi_{n}\mu_{n}\left( 1-\kappa\right) <\mu_{n}\left(1-\kappa\right) \text{,}
\end{equation*}
it thus follows from the comparison test for infinite series that
\begin{equation*}
\sum_{n=0}^{\infty}\xi_{n}\mu_{n}\left(1-\kappa\right)
<\sum_{n=0}^{\infty}\mu_{n}\left(1-\kappa\right) =\infty \text{.}
\end{equation*}
Hence, an application of Lemma 2 to (2.13) yields $\lim_{n\rightarrow \infty
}\sigma_{n}=\lim_{n\rightarrow \infty }\left\Vert q_{n}-s_{n}\right\Vert =0$. Since $\lim_{n\rightarrow \infty }\left\Vert q_{n}-x^{\ast }\right\Vert =0$
and 
\begin{equation*}
\left\Vert s_{n}-x^{\ast }\right\Vert \leq \left\Vert q_{n}-s_{n}\right\Vert
+\left\Vert q_{n}-x^{\ast }\right\Vert \text{,}
\end{equation*}
we have $\lim_{n\rightarrow \infty }\left\Vert s_{n}-x^{\ast }\right\Vert =0$.

Next we will prove (ii)$\Rightarrow $(i), that is, if $s_{n}\rightarrow
x^{\ast }$ as $n\rightarrow \infty $, then $q_{n}\rightarrow x^{\ast }$ as $n\rightarrow \infty $.

Utilizing (12), (14), (15), and (17), we have
\begin{eqnarray}
\left\Vert s_{n+1}-q_{n+1}\right\Vert  &=&\left\Vert R_{M,\lambda }^{H}\left[
Ht_{n}-\lambda At_{n}\right] -\left( 1-\xi_{n}\right) q_{n}-\xi_{n}R_{M,\lambda
}^{H}\left[ Hr_{n}-\lambda Ar_{n}\right] \right\Vert   \notag \\
&\leq &\left( 1-\xi_{n}\right) \left\Vert R_{M,\lambda }^{H}\left[
Ht_{n}-\lambda At_{n}\right] -q_{n}\right\Vert   \notag \\
&&+\xi_{n}\left\Vert R_{M,\lambda }^{H}\left[ Ht_{n}-\lambda At_{n}\right]
-R_{M,\lambda }^{H}\left[ Hr_{n}-\lambda Ar_{n}\right] \right\Vert   \notag
\\
&=&\left(1-\xi_{n}\right) \left\Vert F\left(t_{n}\right) -q_{n}\right\Vert
+\xi_{n}\left\Vert F\left(t_{n}\right) -F\left(r_{n}\right) \right\Vert  
\notag \\
&\leq &\left(1-\xi_{n}\right) \left\{ \left\Vert s_{n}-q_{n}\right\Vert
+\left\Vert s_{n}-F\left( x^{\ast }\right) \right\Vert +\left\Vert F\left(
t_{n}\right) -F\left( x^{\ast }\right) \right\Vert \right\}   \notag \\
&&+\xi_{n}\left\Vert F\left(r_{n}\right) -F\left(t_{n}\right) \right\Vert  
\notag \\
&\leq &\left(1-\xi_{n}\right) \left\{ \left\Vert s_{n}-q_{n}\right\Vert
+\left\Vert s_{n}-x^{\ast }\right\Vert +\kappa\left\Vert t_{n}-x^{\ast
}\right\Vert \right\}   \notag \\
&&+\xi_{n}\kappa\left\Vert t_{n}-r_{n}\right\Vert \text{,}  \label{eqn35}
\end{eqnarray}
\begin{eqnarray}
\left\Vert t_{n}-x^{\ast }\right\Vert  &=&\left\Vert \left(1-\mu
_{n}\right) s_{n}+\mu_{n}R_{M,\lambda }^{H}\left[ Hs_{n}-\lambda As_{n}
\right] -x^{\ast }\right\Vert   \notag \\
&\leq &\left(1-\mu_{n}\right) \left\Vert s_{n}-x^{\ast }\right\Vert
+\mu_{n}\left\Vert R_{M,\lambda }^{H}\left[ Hs_{n}-\lambda As_{n}\right]
-R_{M,\lambda }^{H}\left[ Hx^{\ast }-\lambda Ax^{\ast }\right] \right\Vert  
\notag \\
&=&\left(1-\mu_{n}\right) \left\Vert s_{n}-x^{\ast }\right\Vert +\mu
_{n}\left\Vert F\left( s_{n}\right) -F\left( x^{\ast }\right) \right\Vert  
\notag \\
&\leq &\left(1-\mu_{n}\right) \left\Vert s_{n}-x^{\ast }\right\Vert
+\mu_{n}\kappa\left\Vert s_{n}-x^{\ast }\right\Vert   \notag \\
&=&\left[1-\mu_{n}\left(1-\kappa\right) \right] \left\Vert s_{n}-x^{\ast
}\right\Vert \text{,}  \label{eqn36}
\end{eqnarray}
\begin{eqnarray}
\left\Vert t_{n}-r_{n}\right\Vert  &=&\left\Vert \left(1-\mu_{n}\right)
\left( s_{n}-q_{n}\right) +\mu_{n}\left( R_{M,\lambda }^{H}\left[
Hs_{n}-\lambda As_{n}\right] -R_{M,\lambda }^{H}\left[ Hq_{n}-\lambda Aq_{n}
\right] \right) \right\Vert   \notag \\
&\leq &\left(1-\mu_{n}\right) \left\Vert s_{n}-q_{n}\right\Vert +\mu
_{n}\left\Vert F\left( s_{n}\right) -F\left( q_{n}\right) \right\Vert  
\notag \\
&\leq &\left[1-\mu_{n}\left( 1-\kappa\right) \right] \left\Vert
s_{n}-q_{n}\right\Vert \text{.}  \label{eqn37}
\end{eqnarray}
Substituting (36) and (37) into (35), and using the fact $\kappa\in \left(0,1\right)$ we obtain
\begin{eqnarray}
\left\Vert s_{n+1}-q_{n+1}\right\Vert  &\leq &\left[ 1-\xi_{n}\mu
_{n}\left(1-\kappa\right) \right] \left\Vert s_{n}-q_{n}\right\Vert   \notag \\
&&+\left(1-\xi_{n}\right) \left\{1+\kappa\left[1-\mu_{n}\left(1-\kappa\right)
\right] \right\} \left\Vert s_{n}-x^{\ast }\right\Vert \text{.}
\label{eqn38}
\end{eqnarray}
Denote that
\begin{eqnarray*}
\sigma_{n} &=&\left\Vert s_{n}-q_{n}\right\Vert \text{,} \\
\epsilon_{n} &=&\xi_{n}\mu_{n}\left(1-\kappa\right) \in \left( 0,1\right) \text{,} \\
\rho _{n} &=&\left(1-\xi_{n}\right) \left\{1+\kappa\left[1-\mu_{n}\left(
1-\kappa\right) \right] \right\} \left\Vert s_{n}-x^{\ast }\right\Vert \text{.}
\end{eqnarray*}
Since $\lim_{n\rightarrow \infty }\left\Vert s_{n}-x^{\ast }\right\Vert =0$, 
$\rho _{n}=o\left(\epsilon_{n}\right) $. Hence, an application of Lemma 2 to
(2.17) yields $\lim_{n\rightarrow \infty }\sigma_{n}=\lim_{n\rightarrow
\infty }\left\Vert s_{n}-q_{n}\right\Vert =0$. As $\lim_{n\rightarrow
\infty }\left\Vert s_{n}-x^{\ast }\right\Vert =0$ and 
\begin{equation*}
\left\Vert q_{n}-x^{\ast }\right\Vert \leq \left\Vert s_{n}-q_{n}\right\Vert
+\left\Vert s_{n}-x^{\ast }\right\Vert \text{,}
\end{equation*}
we have $\lim_{n\rightarrow \infty }\left\Vert q_{n}-x^{\ast }\right\Vert =0$.
\end{proof}

Taking Lemma 3 into account, Theorem 2 leads to the following corollary.

\begin{corollary}
Let $H$, $M$, $A$, $\kappa$ and $x^{\ast}$ be as in Theorem 1 and let $\left\{ u_{n}\right\}$, $\left\{q_{n}\right\}$, $\left\{\nu_{n}\right\}$, $\left\{s_{n}\right\}$
be the iterative sequences defined by (11), (12), (16) and (17), respectively, with real sequences $\left\{\xi_{n}\right\} $, $\left\{ \mu_{n}\right\} $ in $\left[ 0,1\right]$
satisfying certain control conditions as in Lemma 3 and Theorem 2. Then the following are equivalent:

(i) $\left\{u_{n}\right\}$ converges to $x^{\ast}\in \mathcal{H}$;

(ii) $\left\{q_{n}\right\}$ converges to $x^{\ast}\in \mathcal{H}$;

(iii) $\left\{\nu_{n}\right\}$ converges to $x^{\ast}\in \mathcal{H}$;

(iv) $\left\{s_{n}\right\}$ converges to $x^{\ast }\in \mathcal{H}$.
\end{corollary}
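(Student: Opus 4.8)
The plan is to obtain the full fourfold equivalence by transitivity, using the sequence $\{q_n\}$ generated by Algorithm ZGY (item (ii)) as the common pivot through which all the remaining equivalences are funneled. Both of the ingredients needed for this are already in hand: Lemma 3 supplies the equivalence of (i), (ii), and (iii), while Theorem 2 supplies the equivalence of (ii) and (iv). No new estimates on the operators $H$, $A$, $M$ or the resolvent $R_{M,\lambda}^{H}$ are required; the work has already been done in the two preceding results.

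Concretely, I would argue as follows. First, by Lemma 3, the statements ``$\{u_n\}$ converges to $x^{\ast}$'', ``$\{q_n\}$ converges to $x^{\ast}$'', and ``$\{\nu_n\}$ converges to $x^{\ast}$'' are mutually equivalent, which settles (i) $\Leftrightarrow$ (ii) $\Leftrightarrow$ (iii). Next, by Theorem 2, the statements ``$\{q_n\}$ converges to $x^{\ast}$'' and ``$\{s_n\}$ converges to $x^{\ast}$'' are equivalent, which is precisely (ii) $\Leftrightarrow$ (iv). Combining these two chains through the shared item (ii), I conclude that (i), (ii), (iii), and (iv) are all equivalent: if any one of the four sequences converges to $x^{\ast}$, then so do the remaining three.

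The only point requiring care---and the closest thing to an obstacle in an otherwise immediate argument---is the compatibility of the control conditions. Lemma 3 is stated under the hypotheses $\lim_{n\rightarrow\infty}\xi_n = 0$ and $\sum_{n=0}^{\infty}\xi_n = \infty$, whereas Theorem 2 requires $\sum_{n=0}^{\infty}\xi_n\mu_n(1-\kappa) = \infty$. To apply both results to the same data $(\{\xi_n\},\{\mu_n\},\kappa)$, one must assume control sequences that simultaneously meet all of these requirements; this is exactly the content of the phrase ``satisfying certain control conditions as in Lemma 3 and Theorem 2'' in the hypothesis of the corollary. Under that assumption the hypotheses of both Lemma 3 and Theorem 2 hold verbatim, so the transitivity argument closes without any further computation.
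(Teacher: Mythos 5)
Your argument is exactly the one the paper intends: the corollary is stated with the remark ``Taking Lemma 3 into account, Theorem 2 leads to the following corollary,'' i.e.\ the equivalence of (i)--(iii) from Lemma 3 is chained with the equivalence (ii) $\Leftrightarrow$ (iv) from Theorem 2 through the common item (ii). Your additional observation that $\{\xi_n\}$, $\{\mu_n\}$ must simultaneously satisfy the control conditions of both results is precisely what the hypothesis of the corollary encodes, so the proposal is correct and matches the paper's approach.
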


Now we are in a position to give the following result which is of great
importance both theoretical and numerical aspects.

\begin{theorem}
Let $H$, $M$, $A$, $\kappa$, and $x^{\ast}$ be defined as in Theorem 1 and let $\left\{\mu
_{n}\right\} $ be a sequence in $\left[ 0,1\right] $ satisfying 

(i) $\sum_{n=0}^{\infty}\mu_{n}=\infty $,

(ii) $\mu \leq\mu_{n}\leq 1$,  for all $n\in\mathbb{N}$ and for some $\mu>0$.

For given $u_{0}=s_{0}\in \mathcal{H}$, consider the iterative sequences $
\left\{u_{n}\right\} $ and $\{s_{n}\}$ defined by (11) and (17),
respectively. Then $\{s_{n}\}$ converges to $x^{\ast }$ at a
rate faster than $\left\{u_{n}\right\}$ does.
\end{theorem}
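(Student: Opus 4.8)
The plan is to produce explicit error bounds for both sequences and then compare them through Definitions 1 and 2. For Algorithm FH, since $u_{n+1}=F(u_n)$ and $x^{\ast}=F(x^{\ast})$, the contraction estimate (15) of Lemma 2 gives $\left\Vert u_{n+1}-x^{\ast}\right\Vert =\left\Vert F(u_n)-F(x^{\ast})\right\Vert \le\kappa\left\Vert u_n-x^{\ast}\right\Vert$, whence by induction $\left\Vert u_n-x^{\ast}\right\Vert \le\kappa^{n}\left\Vert u_0-x^{\ast}\right\Vert =:\theta_n$. For the new algorithm, the estimate already derived in the proof of Theorem 1 (inequality (28), reindexed) yields $\left\Vert s_n-x^{\ast}\right\Vert \le\kappa^{n}\Big(\prod_{i=0}^{n-1}\left[1-\mu_i(1-\kappa)\right]\Big)\left\Vert s_0-x^{\ast}\right\Vert =:\alpha_n$. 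Both $\{\theta_n\}$ and $\{\alpha_n\}$ are sequences of nonnegative reals tending to $0$, so they serve as the error predictions (18) and (19) required by Definition 1.

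Next I would treat the nontrivial case $u_0=s_0\ne x^{\ast}$ (otherwise both iterations are stationary at $x^{\ast}$ and the comparison is vacuous), so that each $\theta_n>0$. Every factor $1-\mu_i(1-\kappa)$ lies in $(0,1)$ because $0<\mu(1-\kappa)\le\mu_i(1-\kappa)\le 1-\kappa<1$, using hypothesis (ii) and $\kappa\in(0,1)$. Since $u_0=s_0$ gives $\left\Vert u_0-x^{\ast}\right\Vert =\left\Vert s_0-x^{\ast}\right\Vert$, the powers of $\kappa$ cancel exactly and the ratio collapses to the pure product $\frac{\alpha_n}{\theta_n}=\prod_{i=0}^{n-1}\left[1-\mu_i(1-\kappa)\right]$.

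It then remains to show this ratio tends to $0$. Applying $1-a\le e^{-a}$ factorwise gives $\frac{\alpha_n}{\theta_n}\le e^{-(1-\kappa)\sum_{i=0}^{n-1}\mu_i}$, which tends to $0$ by hypothesis (i), $\sum_{n=0}^{\infty}\mu_n=\infty$; alternatively, hypothesis (ii) furnishes the cleaner geometric bound $\frac{\alpha_n}{\theta_n}\le\left[1-\mu(1-\kappa)\right]^{n}\to0$. In either case $\lim_{n\to\infty}\frac{\left\vert\alpha_n-0\right\vert}{\left\vert\theta_n-0\right\vert}=0$, so by Definition 2(i) the sequence $\{\alpha_n\}$ converges to $0$ faster than $\{\theta_n\}$, and therefore, by Definition 1, $\{s_n\}$ converges to $x^{\ast}$ faster than $\{u_n\}$. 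The computation is entirely routine; the only points demanding care are verifying the positivity of the factors $1-\mu_i(1-\kappa)$ and invoking $u_0=s_0$ so that the ratio reduces to a product in which the $\kappa^{n}$ terms cancel — there is no genuine obstacle beyond this bookkeeping.
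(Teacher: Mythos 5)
Your proposal is correct and follows essentially the same route as the paper: both derive the bound $\theta_n=\kappa^{n}\left\Vert u_0-x^{\ast}\right\Vert$ for Algorithm FH, reuse estimate (28) from Theorem 1 to get $\alpha_n$, use $u_0=s_0$ to reduce the ratio $\alpha_n/\theta_n$ to the product $\prod\left[1-\mu_i(1-\kappa)\right]$, and conclude via the Berinde-style definitions. The only (harmless) difference is at the last step, where you send the product to zero by the direct geometric bound $\left[1-\mu(1-\kappa)\right]^{n}\to 0$ (or the exponential bound), whereas the paper applies the ratio test to $\sum_n\prod_{i=1}^{n}\left[1-\mu(1-\kappa)\right]$; your handling of the degenerate case $s_0=x^{\ast}$ is a small point the paper omits.
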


\begin{proof}
The following inequality was obtained in (\cite{Noor1}, Theorem 2)
\begin{equation}
\left\Vert u_{n}-x^{\ast }\right\Vert \leq \kappa^{n}\left\Vert u_{0}-x^{\ast
}\right\Vert \text{.}  \label{eqn39}
\end{equation}
From Theorem 1, we have
\begin{equation*}
\left\Vert s_{n+1}-x^{\ast }\right\Vert \leq \kappa^{n+1}\left\Vert s_{0}-x^{\ast
}\right\Vert \prod_{i=0}^{n}\left[1-\mu_{i}\left( 1-\kappa\right)\right] \text{,}
\end{equation*}
or equivalently
\begin{equation}
\left\Vert s_{n}-x^{\ast }\right\Vert \leq \kappa^{n}\left\Vert s_{0}-x^{\ast
}\right\Vert \prod_{i=1}^{n}\left[1-\mu_{i-1}\left(
1-\kappa\right) \right] \text{.}  \label{eqn40}
\end{equation}
It follows from assumption (ii) that
\begin{equation}
\left\Vert s_{n}-x^{\ast }\right\Vert \leq \kappa^{n}\left\Vert s_{0}-x^{\ast
}\right\Vert \prod_{i=1}^{n}\left[ 1-\mu \left( 1-\kappa\right)\right] \text{.}  \label{eqn41}
\end{equation}
Denote that
\begin{eqnarray*}
\alpha_{n} &=&\kappa^{n}\left\Vert s_{0}-x^{\ast }\right\Vert \prod_{i=1}^{n}
\left[ 1-\mu \left( 1-\kappa\right) \right] \text{,} \\
\theta_{n} &=&\kappa^{n}\left\Vert u_{0}-x^{\ast }\right\Vert \text{.}
\end{eqnarray*}
Since $\lim_{n\rightarrow \infty }\kappa^{n}=0$, $\lim_{n\rightarrow \infty
}\alpha_{n}=0$ and $\lim_{n\rightarrow \infty }\theta_{n}=0$, that is, both the
sequences $\left\{\alpha_{n}\right\}$ and $\left\{\theta_{n}\right\}$  converges to zero as assumed in Definition 4. 

Define
\begin{eqnarray*}
\pi_{n} &=&\frac{\left\vert \alpha_{n}-0\right\vert }{\left\vert
\theta_{n}-0\right\vert }=\frac{\kappa^{n}\left\Vert s_{0}-x^{\ast }\right\Vert
\prod_{i=1}^{n}\left[ 1-\mu \left( 1-\kappa\right) \right] }{\kappa^{n}\left\Vert u_{0}-x^{\ast }\right\Vert } \\
&=&\prod_{i=1}^{n}\left[ 1-\mu \left( 1-\kappa\right) \right] \text{.}
\end{eqnarray*}
Thus, we have
\begin{eqnarray*}
\lim_{n\rightarrow \infty }\frac{\pi_{n+1}}{\pi_{n}}
&=&\lim_{n\rightarrow \infty }\frac{\prod_{i=1}^{n+1}\left[
1-\mu \left( 1-\kappa\right) \right] }{\prod_{i=1}^{n}\left[
1-\mu \left( 1-\kappa\right) \right] } \\
&=&\lim_{n\rightarrow \infty }\left[ 1-\mu \left( 1-\kappa\right) \right]  \\
&=&1-\mu \left( 1-\kappa\right) <1\text{,}
\end{eqnarray*}
and so by the ratio test the series $\sum_{n=0}^{\infty}\pi
_{n}$ converges absolutely and hence will converge. This allows us to
conclude that 
\begin{equation*}
\lim_{n\rightarrow \infty }\pi_{n}=\lim_{n\rightarrow \infty }\frac{
\left\vert \alpha_{n}-0\right\vert }{\left\vert \theta_{n}-0\right\vert }=0\text{.}
\end{equation*}
Having regard to part (i) of Definition 5, we conclude that $\left\{ \alpha_{n}\right\} $ converges faster than $\left\{
\theta_{n}\right\} $ which implies that $\left\{ s_{n}\right\}$ converges faster than $\left\{ u_{n}\right\}$.
\end{proof}

\end{document}